\newtheorem{thm}{Theorem}[section]
\newtheorem{cor}[thm]{Corollary}
\newtheorem{prop}[thm]{Proposition}
\newtheorem{lem}[thm]{Lemma}
\newtheorem{defn}[thm]{Definition}
\newtheorem{rem}[thm]{Remark}
\newtheorem{exmp}[thm]{Example}
\newcommand{\NN}{\mathbb{N}}
\title{On Periodic Points in Covering Systems}
\author{Wang Yihan\\2001110016@pku.edu.cn} 
\date{\today}
\begin{document}

\maketitle

\begin{abstract}
We study a system of intervals $I_1,\ldots,I_k$ on the real line and a continuous map $f$ with $f(I_1 \cup I_2 \cup \ldots \cup I_k)\supseteq I_1 \cup I_2 \cup \ldots \cup I_k$. It's conjectured that there exists a periodic point of period $\le k$ in $I_1\cup \ldots \cup I_k$. In this paper, we prove the conjecture by a discretization method and reduce the initial problem to an interesting combinatorial lemma concerning cyclic permutations. We also obtain a non-concentration property of periodic points of small periods in intervals. 
\end{abstract}

\section{Introduction}

We consider a continuous mapping $f$: $\mathbb{R} \to \mathbb{R}$, and $k$ closed intervals $I_1,I_2,...,I_k$. Throughout this paper, $f^l$ denotes the $l$-th iterate of $f$. And all the mappings are assumed to be continuous unless otherwise noted. 

\begin{defn}
We call $(I_1,I_2,\ldots ,I_k;f)$ a $covering\ system$ if $f(I_1 \cup I_2 \cup \ldots \cup I_k)\supseteq I_1 \cup I_2 \cup \ldots \cup I_k$.
\end{defn}

This concept was first introduced by S. A. Bogatyi and E. T. Shavgulidze in their paper \cite{1988Periodic}, when they tried to obtain an analogue of Sharkovskii's theorem for an arbitrary tree. They proved the existence of a periodic point of period $\le g(n)$ in a covering system and established an upper bound for $g(n)$ $(g(n)\le 2(n^2-1)g(n-1)^{n-2})$. Moreover, they showed $g(n)=n$ for $n\le 4$ and conjectured in the remark that $g(n)=n$ holds for all $n\in \NN$, which is proved in this paper as the main theorem.  

\begin{thm}\label{main}
For any covering system $(I_1,I_2,\ldots,I_k;f)$, there exists $x_0\in I_1 \cup I_2 \cup \ldots \cup I_k$ and $l\le k$ such that $f^l(x_0)=x_0$.
\end{thm}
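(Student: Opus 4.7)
The plan is to follow the discretization strategy announced in the abstract: convert the continuous covering condition into a combinatorial statement about a finite colored directed graph, extract a short cycle in this graph via a lemma about cyclic permutations, and then pull the cycle back to a continuous periodic point by the intermediate value theorem.

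For the discretization, I would refine $\bigcup_i I_i$, together with whatever connector pieces of $\RR \setminus \bigcup_i I_i$ are needed, into many short closed subintervals $J_1, \ldots, J_N$. Each $J_a$ that meets $\bigcup_i I_i$ is contained in a single $I_{c(a)}$, giving a $k$-coloring $c : \{1,\ldots,N\} \to \{1,\ldots,k\}$ on the colored pieces. By inserting the endpoints of the $I_i$ and sufficiently many preimages of them under $f$ as cut points, one arranges that for each $J_b$ there is a subinterval $J_b^{\ast} \subseteq J_{a(b)}$ on which $f$ maps onto $J_b$, yielding a predecessor map $b \mapsto a(b)$. The classical Markov covering lemma for continuous interval maps then says that any directed cycle $a_1 \to a_2 \to \cdots \to a_l \to a_1$ in this relation produces, via a direct application of Bolzano's theorem to $f^l$, a genuine periodic point of period at most $l$ starting in $J_{a_1}^{\ast} \subseteq I_{c(a_1)}$. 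Hence the theorem reduces to producing such a cycle with $l \le k$.

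Because $f$ is continuous and the $J_a$ are consecutive on $\RR$, the predecessor data carries order-theoretic structure, and the surjectivity built into the covering property forces it (or a suitable refinement) to decompose as a union of cyclic permutations on the refined index set. The combinatorial lemma promised in the abstract should then take roughly the following form: any such order-respecting cyclic permutation on $N$ points with a $k$-coloring must contain a cycle of length at most $k$ in its colored quotient digraph.

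The main obstacle is twofold. First, one cannot simply use the Markov graph on the original $I_1,\ldots,I_k$, as this graph can easily be acyclic. For example, with $I_1=[0,2]$, $I_2=[5,7]$, $f(I_1)=[1,7]$, and $f(I_2)=[0,1]$ (extended piecewise linearly to $\RR$), the only nontrivial Markov edge is $I_1 \to I_2$; yet the theorem still requires a periodic point of period $\le 2$ in $I_1 \cup I_2$, and indeed $x = 7/6 \in I_1$ is of period $2$ with $f(7/6) = 9/2$ sitting in the gap $(2,5)$. So the discretization must retain some of the "connector" structure outside $\bigcup_i I_i$ while the coloring is still forced to use only the $k$ original labels. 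Second, the combinatorial lemma itself must be proved: the crucial input should be the order constraint coming from the real line, which rules out the wild counterexamples that arbitrary permutations would allow. Once the lemma is in hand, pulling a combinatorial cycle back to a continuous periodic point is a routine application of Bolzano's theorem.
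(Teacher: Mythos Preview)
Your overall architecture---discretize, reduce to a combinatorial statement about cyclic permutations, then pull back via the intermediate value theorem---is exactly the paper's strategy. But two genuine gaps remain.

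First, your discretization step does not terminate. You write that you will insert ``sufficiently many preimages'' of the endpoints as cut points so that each $J_b$ has a predecessor $J_{a(b)}$ mapping onto it. For a generic continuous $f$ no finite iteration of this process stabilizes: the set of cut points simply grows forever. The paper gets around this by arguing by contradiction and invoking a perturbation lemma: if $(I_1,\ldots,I_k;f)$ had no periodic point of period $\le k$, the same would hold for every $g$ with $\lVert g-f\rVert_{C(\RR)}<\delta$. One then iterates the \emph{forward} images of the endpoint set until new points are all within $\delta$ of old ones, and perturbs $f$ slightly so that the process actually closes up. This perturbation trick is the missing idea that makes the discretization finite. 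Relatedly, your proposal to retain ``connector'' pieces in the gaps is unnecessary and somewhat misdirected: the paper discards everything outside $\bigcup_i I_i$ at the discrete level, and your own example is handled not by tracking the gap combinatorially but by the fact that the continuous $f$ on $\RR$ (including the gap) is what one applies the intermediate value theorem to at the very end.

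Second, and more seriously, the combinatorial lemma is where essentially all the content lies, and you leave it as an acknowledged obstacle. The paper's version (Lemma~\ref{chars}) says that for a cyclic permutation $f\in S_n$, the characteristic numbers $m_i=\min\{m:(\mathrm{conv} f)^m(\{i,i+1\})\supseteq\{i,i+1\}\}$, once sorted, satisfy $m_i'\le i$ for all $i$. The proof is not a direct combinatorial argument about order constraints as you suggest; it goes through linear algebra over $\mathbb{F}_2$. One shows that the adjacency matrix $T_f$ of the associated Markov graph satisfies $T_f^n\equiv I\pmod 2$, that its minimal polynomial has full degree $n-1$ (via an explicit cyclic vector), and hence that its characteristic polynomial is $1+\lambda+\cdots+\lambda^{n-1}\pmod 2$. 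The fact that \emph{every} coefficient is nonzero then forces, for each $i$, a nonsingular $i\times i$ principal minor, whose nonzero diagonal term yields $i$ vertices lying on cycles of length $\le i$ in $\Gamma_f$. This is the step your outline does not anticipate.
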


We will prove this theorem by a discretization method, which reduces the initial problem to a lemma in combinatorics. The main idea is to cut the intervals into small subintervals and perturb the mapping $f$ to get a cyclic permutation. And the proof of the theorem is completed by observing a property of cyclic elements of $S_n$.

The proof of the main theorem \ref{main} will be divided into several steps and discussed in the following sections. We now state a well-known proposition which will be frequently used in the proof of theorem \ref{main}. 

\begin{prop}[the case $k=1$]
Let $f:I \to \mathbb{R}$ satisfy $f(I) \supseteq I$. Then there exists a fixed point of $f$ in $I$.
\end{prop}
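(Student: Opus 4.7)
The plan is to reduce this to the intermediate value theorem applied to the auxiliary function $g(x) := f(x) - x$. Write $I = [a,b]$. By the covering hypothesis $f(I) \supseteq I$, both endpoints of $I$ lie in the image of $f$, so I can pick $c, d \in I$ with $f(c) = a$ and $f(d) = b$.

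At these two witness points the sign of $g$ is forced: since $c \in [a,b]$, we get $g(c) = a - c \le 0$, and since $d \in [a,b]$, we get $g(d) = b - d \ge 0$. The function $g$ is continuous on $I$, hence continuous on the closed sub-interval with endpoints $c$ and $d$. The intermediate value theorem then produces some $x_0$ in that sub-interval with $g(x_0) = 0$, which is the desired fixed point.

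There is no real obstacle; the only minor subtlety is the degenerate case $c = d$, which forces $a = b$, collapsing $I$ to a single point that is automatically fixed by $f$. In every other case the argument is the classical graph-crossing trick, and it will serve as the base case $k = 1$ for the inductive or discretization arguments leading up to Theorem \ref{main}.
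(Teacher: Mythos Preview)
Your argument is correct and is precisely the standard intermediate-value-theorem proof the paper has in mind; indeed the paper omits the proof entirely, remarking only that the proposition ``is an easy consequence of the intermediate value theorem in calculus.'' There is nothing to add or compare.
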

This fact is an easy consequence of the intermediate value theorem in calculus so we omit the proof. And we will see later that for $k$ larger, most cases can be reduced to this fundamental one. 

We would like to end this introduction with the statement of the main lemma used in the proof of theorem \ref{main}, which studies the properties of cyclic permutations. Actually, this lemma is an extension of a lemma proved in Sharkovskii's celebrated paper \cite{A1995COEXISTENCE}. See the remark below.

\begin{defn}\label{char}
Let $f$ be a cyclic permutation in the symmetric group $S_n$, $i.e.$, $f$ can be written as $(i_1i_2\ldots i_n)$, meaning that $f: i_1\to i_2\to\ldots\to i_n\to i_1$. And let $A_i=\{i,i+1\}$ $(i=1,2,\ldots,n-1)$ be $(n-1)$ particular sets. We define the characteristic number $m_i$ of each $A_i$ as
$$m_i=\min \{m\arrowvert\ (convf)^m(A_i)\supseteq A_i\}.$$
Here, $convf(A)=$ convex hull of $f(A)=\{\min f(A), \min 
f(A)+1, \ldots,\max f(A)\}$ for any finite set $A\subseteq \mathbb{N}$.\par
And we define the characteristic\ sequence of $f$ to be 
$$m_1'\le m_2'\le \ldots\le m_{n-1}',$$ 
where $\{m_i'\}$ is a rearrangement of $\{m_i\}$.
\end{defn}

\begin{exmp}\label{m2}
If $f=(136245)\in S_6$, then we can calculate $m_1=m_3=m_5=3$, $m_2=2$, $m_4=1$. The characteristic sequence is $1\le2\le3\le3\le3$.\par 
Take $m_2$ as an example. Since $f(A_2)=f(\{2,3\})=\{4,6\}$, $convf(A_2)=conv\{4,6\}=\{4,5,6\}$; and $f(\{4,5,6\})=\{1,2,5\}$, $conv\{1,2,5\}=\{1,2,3,4,5\}$; therefore, $(convf)^2(A_2)=\{1,2,3,4,5\}\supseteq A_2$ and we have $m_2=2$. 
\end{exmp}

Now we can state our main lemma.

\begin{lem}\label{chars}
For any cyclic permutation $f$ in the symmetric group $S_n$, the characteristic sequence $\{m_i'\}$ of $f$ satisfies $m_i'\le i$, \ $i=1,2,\ldots,n-1$.
\end{lem}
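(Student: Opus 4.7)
The plan is to first reinterpret the characteristic number $m_i$ as the length of the shortest directed cycle through $A_i$ in an auxiliary digraph $G$, and then prove the resulting graph-theoretic statement by induction. Define $G$ on the vertex set $\{A_1, A_2, \ldots, A_{n-1}\}$ by placing an edge $A_i \to A_j$ whenever $A_j \subseteq \text{conv}\,f(A_i)$. The key structural fact is that $\text{conv}\,f$ sends any contiguous integer interval to a contiguous interval, so $(\text{conv}\,f)^k(A_i)$ is itself a contiguous interval for every $k$. Using this, I would verify that $m_i$ equals the length of the shortest closed directed walk through $A_i$ in $G$: the direction ``cycle implies return'' is immediate from the monotonicity of $\text{conv}\,f$, and for the converse I would proceed by descending induction on the step index, using that inside $T_{m-1}:=(\text{conv}\,f)^{m-1}(A_i)$ one can locate a consecutive pair $\{r, r+1\}$ with $f(r), f(r+1)$ straddling $[i, i+1]$, providing both the edge $A_r \to A_i$ and (by induction) a walk $A_i \to \cdots \to A_r$ of length $m-1$.

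Under this reformulation, the lemma becomes the statement that for every $k \in \{1, \ldots, n-1\}$, at least $k$ vertices of $G$ lie on some directed cycle of length $\le k$. The case $k=1$ --- existence of a self-loop --- follows easily: since $f$ is a cyclic permutation of length $\ge 2$, it has no fixed point, while $f(1) > 1$ and $f(n) < n$, so the quantity $f(i) - i$ must change sign at some index $i^*$, forcing $f(i^*)$ and $f(i^*+1)$ onto opposite sides of $\{i^*, i^*+1\}$ and yielding $A_{i^*} \subseteq \text{conv}\,f(A_{i^*})$.

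For the general bound, I would induct on $n$. Given a cyclic permutation $f \in S_n$ with a self-loop at $A_{i^*}$, one constructs a cyclic permutation $\tilde f$ on $n-1$ letters by deleting either $i^*$ or $i^*+1$ from the cyclic sequence of $f$ (then relabelling), applies the induction hypothesis to get $\tilde m_i' \le i$ for $i \le n-2$, and compares characteristic sequences to extract $m_i'(f) \le i$ for $i \le n-1$. The main obstacle is precisely this inductive step: cycles in $G$ do not descend cleanly to cycles in the reduced graph $\tilde G$, and the correspondence between the $m_i(f)$ and the $\tilde m_j(\tilde f)$ is delicate. I expect the heart of the proof is a careful case analysis --- depending on the local configuration of $f$ near $i^*$, such as the relative positions of $f(i^*), f(i^*+1)$ and of the pre-images of $i^*, i^*+1$ under $f$ --- which shows that an appropriate choice of deletion either preserves every short cycle through each vertex or replaces it with one of no greater length in $\tilde G$, while the newly created self-loop supplies the ``extra'' entry $m_{n-1}'$ needed to complete the sequence.
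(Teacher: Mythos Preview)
Your reinterpretation of $m_i$ as the shortest closed walk through $A_i$ in the digraph $G$ is correct and matches the paper's Proposition~\ref{gr}; the $k=1$ self-loop argument is also fine. However, the inductive step is not a proof but a hope: you explicitly say you ``expect'' a case analysis will show that deleting $i^*$ or $i^*+1$ never lengthens the shortest cycle through any other vertex, but you do not carry this out, and it is not clear that it can be made to work. The difficulty is real. Deleting $i^*$ from the cycle $(i_1\cdots i_n)$ changes the value of $f$ at the predecessor $f^{-1}(i^*)$, which may sit far from $i^*$; this alters the out-edges of two vertices of $G$ that have nothing to do with $A_{i^*}$, and a short cycle through some unrelated $A_r$ may pass through exactly those vertices and be destroyed. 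Conversely, a short cycle in $\tilde G$ may use the edge created by the new value $\tilde f(f^{-1}(i^*))=f(i^*)$, and there is no obvious reason this lifts to a cycle of the same length in $G$. So the comparison $m_i'(f)\le \tilde m_{i-1}'(\tilde f)+1$ (or whatever inequality you intend) is not established, and the proposal as written has a genuine gap at its core.

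The paper avoids this combinatorial morass entirely by an algebraic argument over $\mathbb{F}_2$. One forms the $(n-1)\times(n-1)$ adjacency matrix $T_f$ of $G$, proves that its characteristic polynomial over $\mathbb{F}_2$ is exactly $1+\lambda+\cdots+\lambda^{n-1}$ (using that $T_f^n\equiv I$ and that the minimal polynomial has full degree, the latter via an explicit cyclic vector), and then reads off the lemma: the coefficient of $\lambda^{n-1-i}$ being nonzero forces some $i\times i$ principal minor of $T_f$ to be nonzero, hence some permutation of those $i$ rows/columns has all entries $1$, which is precisely a union of directed cycles in $G$ covering $i$ vertices, each of length at most $i$. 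This gives $i$ characteristic numbers $\le i$ in one stroke, with no induction on $n$ and no case analysis.
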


\begin{rem}
(1) For another form of this lemma more closely related to the initial problem, see lemma \ref{dis} in the next section. \par
(2) In theorem 7 of Sharkovskii's paper \cite{A1995COEXISTENCE}, it is actually proved that $m_i'\le n-1$, for $i=1,2,\ldots,n-1$. Therefore, this lemma is a generalization of that result.\par 
Meanwhile, this lemma leads in some sense to a non-concentration property of periodic points with lower periods. Namely, it follows that if $x_{k_1}<x_{k_2}<\ldots<x_{k_n}$ is an orbit of periodic points of period $n$ under $f$, then between any two adjacent points $x_{k_i}$ and $x_{k_{i+1}}$, there exists a periodic point $y_i$ of period $m_i\le n-1$. Futhermore, these $(n-1)$ numbers $m_i$ can be rearranged to have $m_i'\le i$. In other words, all the periodic points of period $\le i$ cannot lie only in $(i-1)$ of the intervals $[x_{k_j},x_{k_{j+1}}]$, $1\le j\le n-1$.\par      
(3) One can see that the statement of lemma \ref{chars} doesn't involve any notations in analysis. And hence it is of independent interest in the study of the symmetric group $S_n$ in combinatorics.\par
(4) It is necessary to require the permutation be cyclic. As a counterexmaple, consider $f=(13)(2)\in S_3$, then the characteristic numbers are $m_1=m_2=2$.
\end{rem}

 We organize the paper as follows. In section 2, we introduce the discretization method and reduce theorem \ref{main} in dynamics to a lemma in combinatorics. Then in section 3, we prove the equivalence of the lemma in section 2 and the one stated above. Finally in section 4, we prove lemma \ref{chars} .

\section{Discretization}

In this section we show that Theorem \ref{main} can be proved by solving another discrete problem related to it. The main idea is to divide the initial intervals into smaller subintervals, such that the image of each subinterval under $f$ contains the whole of some other subintervals, not just part of them. However, it may happen that no matter how you divide the intervals, there will be an interval whose image under $f$ contains only part of another interval. What saves us here is that, we can consider perturbation of $f$ if the intervals are cut into enough small pieces. Indeed, we will use the following lemma:

\begin{lem}\label{ptb}
If there is a covering system $(I_1,I_2,\ldots,I_k;f)$ such that $\forall x \in I_1\cup \ldots\cup I_k$, and $\forall\: l\le k, f^l(x)\neq x,$ then $\exists\: \delta >0$ depending on $f$, $s.t.$, whenever $\lVert g-f\rVert_{C(\mathbb{R})}<\delta$, it also holds that $g^l(x)\neq x$, for all $l\le k$, $x\in I_1\cup \ldots \cup I_k$.
\end{lem}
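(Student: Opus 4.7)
The plan is to exploit compactness of $K := I_1 \cup \ldots \cup I_k$ together with uniform continuity of $f$ on a compact neighborhood of the forward orbit of $K$. For each $l \in \{1, \ldots, k\}$, the function $\phi_l(x) := f^l(x) - x$ is continuous and, by hypothesis, nonvanishing on the compact set $K$; hence there exists $c_l > 0$ with $|\phi_l(x)| \ge c_l$ on $K$. Set $c := \min_{l \le k} c_l > 0$. The entire proof reduces to showing that, for $\delta$ sufficiently small, every $g$ with $\|g - f\|_{C(\mathbb{R})} < \delta$ satisfies $|g^l(x) - f^l(x)| < c/2$ on $K$ for each $l \le k$; the triangle inequality then yields $|g^l(x) - x| \ge c - c/2 > 0$, as required.

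To control $|g^l(x) - f^l(x)|$, let $K' := K \cup f(K) \cup \ldots \cup f^k(K)$, which is compact since $f$ is continuous, and let $U$ be the open $1$-neighborhood of $K'$. Then $f$ is uniformly continuous on $\overline{U}$, with some modulus of continuity $\omega$ satisfying $\omega(s) \to 0$ as $s \to 0^+$. Define recursively $\delta_1 := \delta$ and $\delta_{j+1} := \delta + \omega(\delta_j)$. A straightforward induction on $j$ shows that, whenever $\delta_{j-1} < 1$, one has $g^i(x) \in U$ for all $i \le j$ and $x \in K$, and $|g^j(x) - f^j(x)| \le \delta_j$. The inductive step rests on the decomposition
\[
|g^{j+1}(x) - f^{j+1}(x)| \le |g(g^j(x)) - f(g^j(x))| + |f(g^j(x)) - f(f^j(x))| \le \delta + \omega(\delta_j).
\]
Since $\omega(s) \to 0$, we have $\delta_k \to 0$ as $\delta \to 0$, so one can choose $\delta$ so small that $\delta_l < c/2$ for every $l \le k$.

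The main (and essentially only) obstacle is the bookkeeping in the second step: one has to ensure that the iterates $g^i(x)$ stay inside $U$, the region where $f$ is known to be uniformly continuous, so that the recursion on $\delta_j$ remains valid throughout. Because everything takes place on or near the compact set $K'$, this is a standard compactness estimate with no conceptual difficulty; the lemma is morally a robustness statement asserting that the absence of low-period periodic points for $f$ is preserved under sufficiently small $C^0$ perturbations.
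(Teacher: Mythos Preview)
Your proof is correct and follows essentially the same route as the paper's: both obtain a uniform positive lower bound for $|f^l(x)-x|$ on the compact set $K$ and then control $|g^l(x)-f^l(x)|$ inductively via the identical telescoping decomposition $g^{j+1}-f^{j+1} = (g-f)\circ g^j + f\circ g^j - f\circ f^j$, invoking uniform continuity of $f$ on a compact $1$-neighborhood of the forward images of $K$. Your write-up is in fact somewhat more explicit (the modulus $\omega$ and the recursion $\delta_{j+1}=\delta+\omega(\delta_j)$) than the paper's, but the argument is the same.
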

\begin{proof}
Recall that $f$ has a periodic point of period $\le k$ in $I_1\cup \ldots\cup I_k$ is equivalent to that $F(x)=(f(x)-x)\ldots(f^k(x)-x)$ has a zero in $I_1\cup \ldots\cup I_k$. Therefore, we only need to control $\lvert f^l(x)-g^l(x) \rvert$ for $1\le l\le k$, $x\in I_1\cup \ldots\cup I_k$. And this will be done inductively. Indeed, we can write:
$$f^{l+1}(x)-g^{l+1}(x)=f(f^l(x))-f(g^l(x))+f(g^l(x))-g(g^l(x))$$
And if we assume we have proved $\lvert f^l(x)-g^l(x)\rvert$ is sufficiently small (at least less than 1) for all $x\in I_1\cup \ldots \cup I_k$, then choose a closed interval $[p,q]$ containing $f^l(I_1\cup \ldots \cup I_k)$. We will have $g^l(I_1\cup \ldots \cup I_k)\subseteq [\,p-1,\,q+1\,]$. Thus, we will obtain a control of $\lvert f^{l+1}(x)-g^{l+1}(x) \rvert$ when $x\in I_1\cup \ldots \cup I_k$, since $f$ is uniformly continuous on $[\,p-1,\,q+1\,]$ and $\lVert g-f\rVert_{C(\mathbb{R})}<\delta$. Finally, we repeat the procedure $k$ times and get the conclusion.
\end{proof}

Now we can introduce the discrete problem related to Theorem \ref{main}. To begin with, we make a technical assumption on the initial covering system in Lemma \ref{ptb}. (See remark \ref{ass} for the reason). Namely, if $I_j=[a_j,b_j]$ and $f(I_j)=[c_j,d_j]$ in the given covering system $(I_1,I_2,\ldots,I_k;f)$, then we can assume $f(\{a_j,b_j\})=\{c_j,d_j\}$ without loss of generality, since we can replace $I_j$ by a smaller subinterval if necessary.\par
Let $a_1<b_1<a_2<b_2<\ldots<a_k<b_k$ be all the end points of the intervals $I_1,\ldots,I_k$ and denote $M_0=\{a_1,b_1,\ldots,a_k,b_k\}$. Consider their images $f(M_0)=\{ f(a_1),\ldots,f(b_k)\}$ and we eliminate the points which don't belong to $I_1\cup \ldots\cup I_k$ to obtain a set $S_1\subseteq f(M_0)$. Put $M_1=M_0\cup S_1$. Inductively, in each step we consider the image of $M_i$ under the mapping $f$ and we eliminate those points dropping out of $I_1\cup ...\cup I_k$ to get a set $S_{i+1}\subseteq f(M_i)$. Then let $M_{i+1}=M_i\cup S_{i+1}$. 

Finally we obtain a sequence of sets $M_0\subseteq M_1\subseteq \ldots \subseteq M_i\subseteq \ldots$, where each $M_i$ contains the images of the initial end point set $M_0$ under \{$id,f,f^2,\ldots,f^i$\} except for those dropping out of $I_1\cup \ldots\cup I_k$.

Then we can find a sufficiently large integer $N$ such that 
$\forall x\in M_N-M_{N-1}$, $dist(x, M_{N-1})<\delta$, where $\delta$ comes from Lemma \ref{ptb}. Such $N$ exists because if not, there will be a sequence of disjoint points in $[a_1,b_k]$ with pairwise distance larger than $\delta$, which is absurd.  Therefore, we can take small perturbation of $f$ to get a map $\tilde{f}$ with $\lVert \tilde{f}-f\rVert<\delta$, and $\widetilde{M_{N-1}}=\widetilde{M_N}$ for $\tilde{f}$. Indeed, we just move each point $(x_0,f(x_0))\in M_{N-1}\times M_{N}$ on the graph of $f$ to the nearest point $(x_0,y_0)\in \{x_0\}\times M_{N-1}$ preserving the continuity and the covering property of $f$. For example, in a neighborhood of each $x_0\in M_N$, we can define $\tilde{f}(x)=f(x)\pm\sigma\phi(x-x_0)$, where $0\le\phi\le1$ is a cut-off function, $\phi(0)=1$, $\sigma<\delta$ is chosen so that $\tilde{f}(x_0)\in M_{N-1}$. And for different $x_0$ we choose different signs $\pm$ so that $(I_1,I_2,\ldots,I_k;\tilde{f})$ is still a covering system. (See Figure \ref{fig:my_label1} and \ref{fig:my_label2}.)\par
Thus, if we divide the intervals into small pieces using the points in $\widetilde{M_N}$, then $\tilde{f}$ will have the nice property that the image of each subinterval contains entirely another one or more subintervals. In other words, $\tilde{f}$ can be regarded as a discrete mapping from $\{1,2,\ldots,n\}$ to \{subsets of $\{1,2,\ldots,n\}$\}. Here each number $j$ represents a small piece of interval $J_j$. And $j'\in f(j)$ if $f(J_j)\supseteq J_{j'}$. Note that the image of some interval may be empty, since we ignore the part outside $I_1\cup \ldots\cup I_k$. And we only care about the interval determined by $\{f(a),f(b)\}$ if $a,b\in \widetilde{M_{N}}$, although the image $f([a,b])$ may be larger.

\begin{exmp}
The two figures illustrate how we perturb $f$ locally to obtain $\tilde{f}$ with $\tilde{f}$ still continuous and inducing a covering system close to the initial one.
\begin{figure}[ht]
    \centering
    \includegraphics[width=\textwidth]{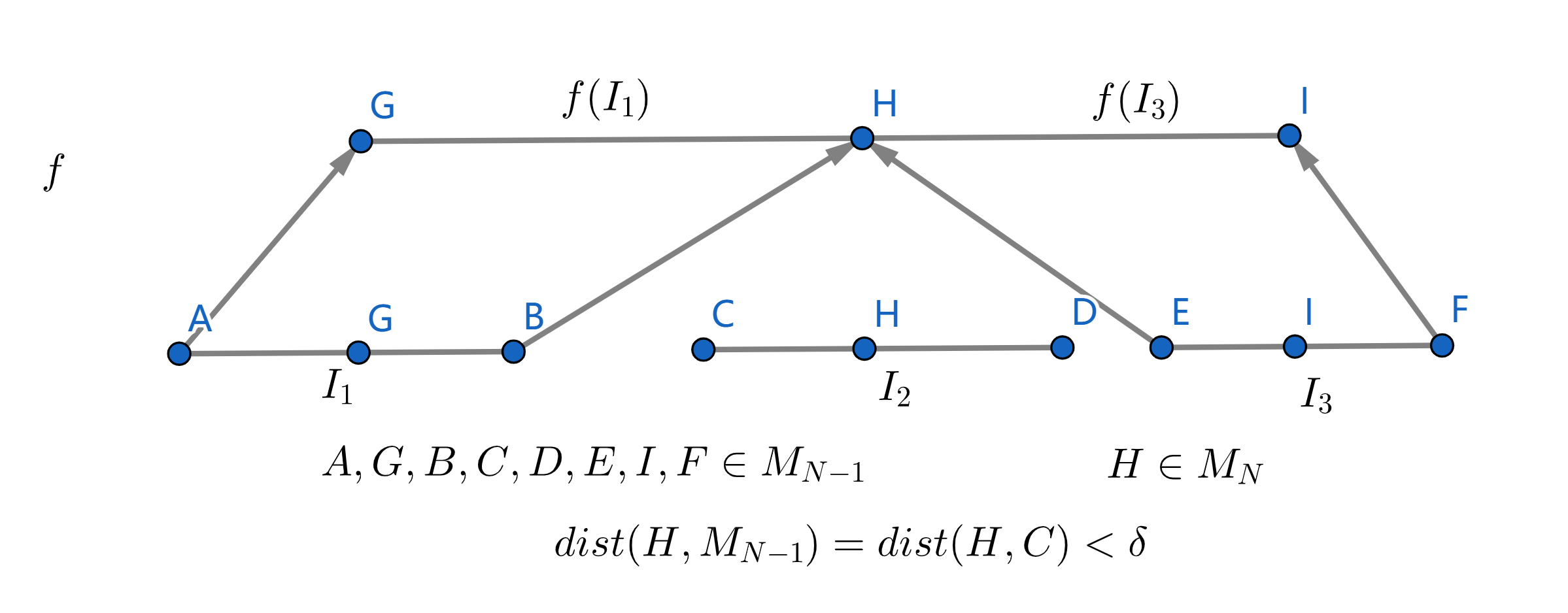}
    \caption{the initial $f$}
    \label{fig:my_label1}
\end{figure}
\begin{figure}[ht]
    \centering
    \includegraphics[width=\textwidth]{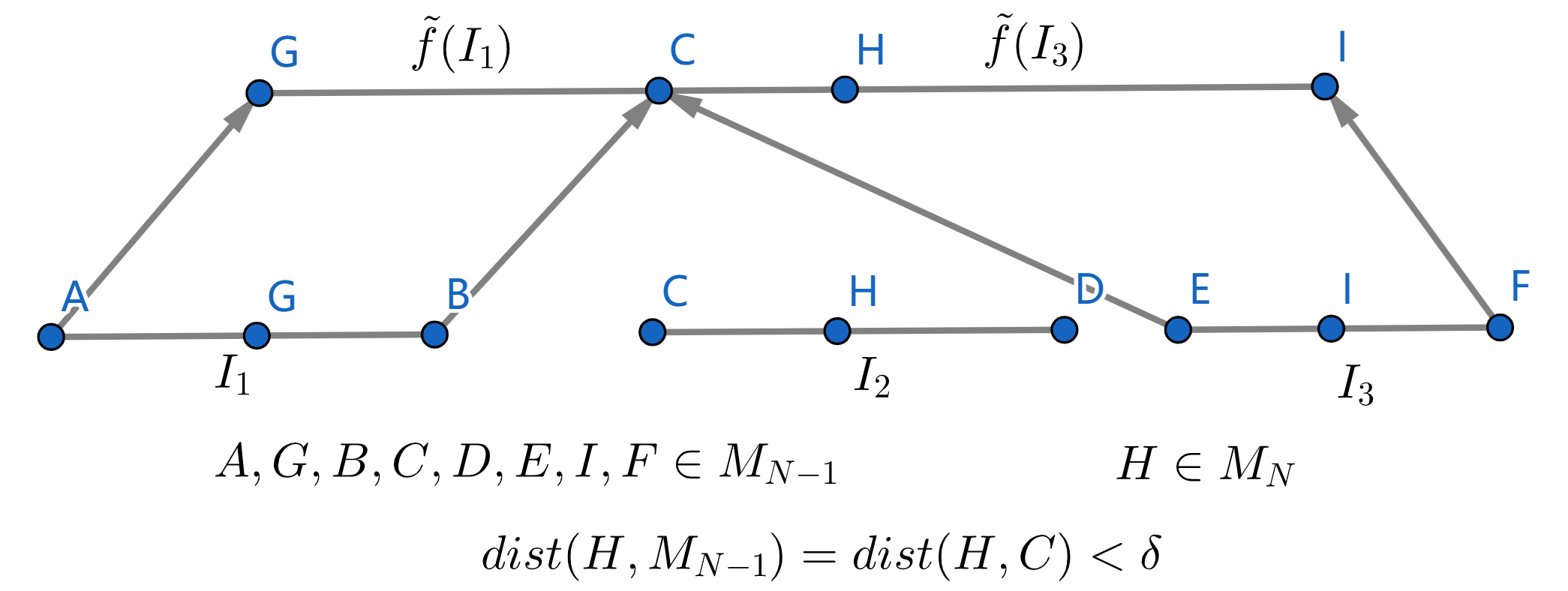}
    \caption{the construction of $\tilde{f}$}
    \label{fig:my_label2}
\end{figure}
\end{exmp}

It should also be remarked here that the only useful information of $f$ on the so called ``gaps" (see \cite{1988Periodic}) between two adjacent intervals $I_i$ and $I_{i+1}$ is the behavior of the end points of each gap. In other words, for the image $f(J_j)$ of each small subinterval $J_j$, we only focus on the part $f(J_j)\cap (I_1\cup\ldots\cup I_k)$. This is the reason why we eliminate the points outside $I_1\cup\ldots\cup I_k$ in each step. We will see later that this information is enough for the proof.

To sum up, once we have a mapping $f$ satisfying the conditions in Lemma \ref{ptb}, we can find an $\tilde{f}$ close enough to $f$, which also satisfies the conditions but can be viewed as a discrete mapping. Such $f$ and $\tilde{f}$ are counterexamples to Theorem \ref{main}. Therefore, if we argue by contradiction, we can easily see that Theorem \ref{main} is a corollary of the following lemma, whose proof will be discussed later. 

\begin{lem}\label{dis}
For any mapping $f$: $\{1,2,\ldots,n\}$ $\to$ \{subsets of $\{1,2,\ldots,n\}\}$ such that $\bigcup_{i=1}^n f(i)=\{1,2,\ldots,n\}$, and for any partition 
$$I_1=\{1,2,\ldots,i_1\}, I_2=\{i_1+1, i_1+2,\ldots,i_2\}, \ldots, I_k=\{i_{k-1}+1,\ldots,n\},$$
there exists $j\in \{1,2,\ldots,k\}$ and $r,s\in I_j$ ($r=s$ is allowed) such that
$$(convf)^l(\{r,s\})\supseteq \{r,s\},\quad for\ some \quad l\le k.$$
Here, $convf(A)=$ convex hull of $f(A)=\{\min f(A), \min f(A)+1,\ldots,\max f(A)\}$ for any finite set $A\subseteq \mathbb{N}$.   
\end{lem}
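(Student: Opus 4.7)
The plan is to deduce Lemma \ref{dis} from Lemma \ref{chars} by extracting from the multivalued map $f$ a bona fide cyclic permutation, applying Lemma \ref{chars} to it, and pushing the conclusion back to $f$.

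First I extract a cycle. Since $\bigcup_i f(i) = \{1,\ldots,n\}$, every vertex has in-degree at least one in the directed graph with edge set $\{(i,j) : j\in f(i)\}$, so walking backward from any vertex must eventually revisit a previously visited vertex; this produces a directed cycle supported on some set $V = \{v_1 < v_2 < \cdots < v_L\} \subseteq \{1,\ldots,n\}$. Let $\sigma \colon V \to V$ denote the single-valued cyclic map defined by this cycle, so $\sigma(v) \in f(v)$ for every $v \in V$. In the easy case $L \le k$, for any $v \in V$ we have $\sigma^L(v) = v$, and iterating the inclusion $\sigma(A) \subseteq f(A) \subseteq convf(A)$ together with monotonicity of $convf$ gives $v \in (convf)^L(\{v\})$; taking $r = s = v$ in whichever $I_j$ contains $v$, and $l = L$, already verifies Lemma \ref{dis}.

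The interesting case is $L \ge k+1$. Order-preservingly renumber $V$ as $\{1,\ldots,L\}$, so that $\sigma$ becomes a cyclic permutation $\sigma' \in S_L$. The partition $I_1,\ldots,I_k$ induces a partition of $V$ into $k' \le k$ nonempty consecutive blocks, which lifts to a partition of $\{1,\ldots,L\}$ contributing at most $k-1$ "boundary" edges among the $L-1$ edges $A_i = \{i,i+1\}$. Applying Lemma \ref{chars} to $\sigma'$ produces a characteristic sequence $m_1' \le m_2' \le \cdots \le m_{L-1}'$ with $m_i' \le i$, so in particular at least $k$ of the characteristic numbers $m_i$ are $\le k$. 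Pigeonhole then forces at least one interior edge $A_{i^*} = \{i^*, i^*+1\}$ (both endpoints in the same block) to satisfy $m_{i^*} \le k$, i.e., $(conv\sigma')^{m_{i^*}}(A_{i^*}) \supseteq A_{i^*}$.

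It remains to transfer this conclusion back to $f$. Under the bijection $V \cong \{1,\ldots,L\}$, the pair $A_{i^*}$ corresponds to $\{r,s\} = \{v_{i^*}, v_{i^*+1}\} \subseteq I_j$ for some $j$. Since $\sigma(v) \in f(v)$ for every $v \in V$ and $V \subseteq \{1,\ldots,n\}$, one checks $conv\sigma'(A) \subseteq convf(A)$ for every $A \subseteq V$; iterating using monotonicity of $convf$ gives $(conv\sigma')^l(A) \subseteq (convf)^l(A)$ for every $l$ and every $A \subseteq V$. Specialising to $A = \{r,s\}$ and $l = m_{i^*}$ yields $(convf)^{m_{i^*}}(\{r,s\}) \supseteq \{r,s\}$ with $m_{i^*} \le k$, completing the proof. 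The central obstacle is thus Lemma \ref{chars} itself: it provides the bound $m_i' \le i$ for cyclic permutations, which combined with the $(k-1)$-bound on boundary edges powers the pigeonhole; once that lemma is granted, everything else (cycle extraction, renumbering, and transferring convex-hull inclusions) is routine.
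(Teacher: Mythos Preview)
Your proposal is correct and follows essentially the same route as the paper: reduce to a cyclic permutation, invoke Lemma~\ref{chars}, and use pigeonhole on the at most $k-1$ boundary indices versus the at least $k$ characteristic numbers that are $\le k$. The only difference is organisational: the paper first simplifies the multivalued $f$ down to a cyclic permutation via the reductions (1)--(4) (disjoint images, no empty images, restrict to an orbit), whereas you keep $f$ intact, extract a cycle $\sigma$ on a subset $V$, and then push the conclusion back to $f$ via the monotonicity inclusion $\phi\bigl((conv\sigma')^l(A)\bigr)\subseteq (convf)^l(\phi(A))$; you also handle the short-cycle case $L\le k$ explicitly, which the paper's Proposition~\ref{imp} leaves implicit.
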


\begin{rem}\label{ass}
We emphasize here again the difference between $f([a,b])$ and $[f(a),f(b)]$. In general the former is larger. And one may worry that the discrete mapping $f$ induced from a covering system does not satisfy the corresponding covering property $\bigcup_{i=1}^n f(i)=\{1,2,\ldots,n\}$, since in the definition of $f$, $j'\in f(j)$ iff $f(J_j)\supseteq J_{j'}$, where $f(J_j)$ is understood in the sense of $[f(a),f(b)]$ rather than $f([a,b])$. However, this problem can be easily handled by considering the initial covering system to be minimal. (See \cite{1988Periodic} for more details.) That is, in particular, the end points of each $I_j$ are mapped to the end points of $f(I_j)$. And it's now easily checked that the perturbation and discretization preserve the covering property of $f$, since the perturbation does not change the position of the end points of $f(I_j)$ essentially.       
\end{rem}

\begin{proof}[Proof of Theorem \ref{main} by lemma \ref{dis}]
Suppose the covering system $(I_1,\ldots,I_k;f)$ is a counterexample to Theorem \ref{main}. Then as mentioned above, after perturbation, we can assume $f$ satisfies the conditions of lemma \ref{dis}. Therefore the conclusion of lemma \ref{dis} gives us a subinterval of $I_j$ represented by $conv\{r,s\}$ whose image under $f^l$ contains itself. Thus there must exist a periodic point of period $\le l\le k$ in $I_j$, a contradiction.
\end{proof}

\section{Equivalence of the Two Lemmas}

Although lemma \ref{dis} reduces Theorem \ref{main} to a discrete problem, it is not satisfactory since it involves something inconvenient to deal with, like convex hull and partition. In the following we will simplify the conditions in lemma \ref{dis} and find another description of it which only depends on the own property of a permutation. And the main goal of this section is to prove the equivalence of lemma \ref{chars} and lemma \ref{dis}.

Let $f$ be in lemma \ref{dis}. We can assume further:\par
(1)$\quad \forall i\neq j$, $f(i)\cap f(j)= \emptyset$.\par
(2)$\quad \forall i$, $f(i) \neq \emptyset$.\par
(3)$\quad \forall i$, $f(i)$ contains exactly one element.\par
(4)$\quad f$ is a cyclic permutation in the symmetric group $S_n$, $i.e.$, $f$ can be written as $(i_1i_2\ldots i_n)$.\par
Indeed, (1) can be realized since we can move out the common part of $f(i)$ and $f(j)$ from one of them without changing the property $\bigcup_{i=1}^n f(i)=\{1,2,\ldots,n\}$.\par
(2) is also satisfied because we can eliminate the number $i$ and restrict the mapping $f$ to $\{1,2,\ldots,n\}-\{i\}$ if necessary.\par
(3) is a consequence of (1), (2) and the condition $\bigcup_{i=1}^n f(i)=\{1,2,\ldots,n\}$. Indeed, one can apply the operations in (1) and (2) repeatedly until a minimal case. This couldn't be an endless process because we are dealing with finite sets. And one can prove that the minimal set cannot be empty because the covering property is preserved.\par
For (4), note that $f \in S_n$ because of (3). Suppose $f$ is not cyclic. Then choose an orbit $(i_1\ldots i_m)$ of $f$ ($m<n$) and restrict both $f$ and the partition $I_1,\ldots,I_k$ to $\{i_1,\ldots,i_m\}$.

In conclusion, we obtain:
\begin{prop}
It is sufficient to consider $f\in S_n$ and $f$ being cyclic in lemma \ref{dis}.
\end{prop}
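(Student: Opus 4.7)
The plan is to verify each of the four reductions (1)--(4) sketched above in order, in each case arguing the contrapositive: if the conclusion of Lemma \ref{dis} holds for the modified map $f'$, then it already holds for the original $f$. Chaining the four steps replaces an arbitrary $f$ satisfying the covering hypothesis by a cyclic permutation $f' \in S_{n'}$ with $n' \le n$, on a partition with at most $k$ blocks, which is precisely the content of the proposition.

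For (1), when $x \in f(i) \cap f(j)$ with $i \ne j$, delete $x$ from $f(j)$; the covering union is preserved because $x$ remains in $f(i)$. Since $f'(A) \subseteq f(A)$ for every $A$, monotonicity of $\min$ and $\max$ yields $\mathrm{conv}\,f'(A) \subseteq \mathrm{conv}\,f(A)$ and inductively $(\mathrm{conv}\,f')^l(A) \subseteq (\mathrm{conv}\,f)^l(A)$, so any inclusion $(\mathrm{conv}\,f')^l(\{r,s\}) \supseteq \{r,s\}$ transports immediately to $f$. Iterating finitely often makes the images pairwise disjoint. For (2), if $f(i) = \emptyset$, remove $i$ from the domain and from every $f(j)$, then relabel $\{1,\ldots,n\}\setminus\{i\}$ order-preservingly by some $\pi$ to $\{1,\ldots,n-1\}$, with an analogous shrinking of the partition block that contained $i$. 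The crux is the identity $\mathrm{conv}\,\pi(B) = \pi(\mathrm{conv}\,B \setminus \{i\})$ for every $B \subseteq \{1,\ldots,n\}\setminus\{i\}$, proved by case analysis on whether $\min B \le i \le \max B$: $\pi$ closes the single gap that removing $i$ might open in $\mathrm{conv}\,B$. A short induction combining this identity with $f'(A) = \pi(f(\pi^{-1}A)\setminus\{i\})$ shows that any inclusion $(\mathrm{conv}\,f')^l(\{r',s'\}) \supseteq \{r',s'\}$ lifts to $(\mathrm{conv}\,f)^l(\{r,s\}) \supseteq \{r,s\}$ with $r=\pi^{-1}r'$, $s=\pi^{-1}s'$. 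Claim (3) is then immediate from (1) and (2): the sets $f(i)$ are nonempty, pairwise disjoint, and cover the ambient set, so each is a singleton and $f$ is a bijection.

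For (4), given $f \in S_{n'}$ non-cyclic, pick any $f$-orbit $C = \{i_1,\ldots,i_m\}$ with $m < n'$, which is $f$-invariant; restrict $f$ to $C$, relabel $C$ order-preservingly to $\{1,\ldots,m\}$, and restrict the partition by intersection with $C$, discarding empty blocks. The restriction is cyclic on $m$ letters with at most $k$ blocks, and iterating the step-(2) gap-closing identity once for each point of $\{1,\ldots,n'\}\setminus C$ shows that any solution for the cyclic restriction lifts back to a solution for $f$. The main technical obstacle throughout is precisely this bookkeeping of how the convex-hull operator interacts with order-preserving restriction and relabeling; once the gap-closing identity of step (2) is verified, each individual reduction becomes a one-line monotonicity argument.
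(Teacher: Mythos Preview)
Your argument is correct and follows the same four-step reduction that the paper sketches; you have simply supplied the monotonicity and relabeling details (in particular the gap-closing identity $\mathrm{conv}\,\pi(B)=\pi(\mathrm{conv}\,B\setminus\{i\})$ and the inclusion $\pi^{-1}((\mathrm{conv}\,f')^l(A))\subseteq(\mathrm{conv}\,f)^l(\pi^{-1}A)$) that the paper leaves implicit. No substantive difference in approach.
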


Now we turn to the proof of the equivalence of lemma \ref{dis} and lemma \ref{chars}. (See also Definition \ref{char}.)

\begin{prop}\label{imp}
Lemma \ref{chars} implies lemma \ref{dis}.
\end{prop}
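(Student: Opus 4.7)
The plan is a pigeonhole argument that plays the abundance of small characteristic numbers guaranteed by Lemma \ref{chars} against the scarcity of cut points in the partition. By the preceding discussion I may assume $f \in S_n$ is a cyclic permutation. Given a partition of $\{1,\ldots,n\}$ into $k$ consecutive intervals $I_1,\ldots,I_k$ with cut points $i_1<\cdots<i_{k-1}$, I classify each of the $n-1$ pairs $A_i=\{i,i+1\}$ either as a \emph{boundary pair} (if $i$ and $i+1$ lie in different $I_j$'s, which happens exactly when $i\in\{i_1,\ldots,i_{k-1}\}$) or as an \emph{interior pair} (entirely contained in some $I_j$). This gives $k-1$ boundary pairs and $n-k$ interior pairs.

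Next I would invoke Lemma \ref{chars}. Assuming $k\le n-1$, the bound $m_i'\le i$ at $i=k$ says that the $k$ smallest of the $n-1$ characteristic numbers are all $\le k$; equivalently, at least $k$ of the pairs $A_1,\ldots,A_{n-1}$ satisfy $m_i\le k$. Since only $k-1$ pairs are boundary pairs, by pigeonhole at least one pair $A_i$ with $m_i\le k$ is interior to some $I_j$. Taking $r=i$, $s=i+1$, and $l=m_i$ then yields $r,s\in I_j$ and $(\mathrm{conv}\,f)^l(\{r,s\})=(\mathrm{conv}\,f)^{m_i}(A_i)\supseteq A_i=\{r,s\}$, which is exactly the conclusion of Lemma \ref{dis}.

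The one edge case is $k=n$: then each $I_j$ is a singleton and no $A_i$ is interior to any $I_j$, so the pair argument collapses. But Lemma \ref{dis} explicitly permits $r=s$, so I just take any singleton $\{r\}=I_j$; since $f$ is a cyclic $n$-cycle one has $(\mathrm{conv}\,f)^n(\{r\})=\{f^n(r)\}=\{r\}$, giving $l=n\le k$.

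I do not expect a real obstacle here: the content has been packaged into Lemma \ref{chars}, and the only things to be careful about are the book-keeping (counting boundary pairs correctly, and remembering that $\{m_i'\}$ is a rearrangement of $\{m_i\}$ so that ``$k$ small values of $m_i'$'' genuinely corresponds to $k$ distinct indices $i$) and the trivial but syntactically distinct case $k=n$.
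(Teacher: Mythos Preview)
Your argument is correct and is essentially the same pigeonhole as the paper's: both note that Lemma~\ref{chars} gives at least $k$ indices $i$ with $m_i\le k$, while only the $k-1$ cut-point pairs $A_{i_1},\ldots,A_{i_{k-1}}$ can fail to lie inside a single $I_j$, so some interior $A_t\subseteq I_j$ has $m_t\le k$ and one takes $r=t$, $s=t+1$, $l=m_t$. You are in fact slightly more careful than the paper in isolating and dispatching the degenerate case $k=n$ via $r=s$ and $f^n=\mathrm{id}$.
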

\begin{proof}
As mentioned above, we can assume $f\in S_n$ being cyclic, without loss of generality. \par
Suppose lemma \ref{chars} is true. Then for any partition 
$$I_1=\{1,2,\ldots,i_1\},\quad I_2=\{i_1+1,\ldots,i_2\},\quad \ldots,\quad I_k=\{i_{k-1}+1,...,n\},$$
the $(k-1)$ characteristic numbers $\{m_{i_1},m_{i_2},\ldots,m_{i_{k-1}}\}$ cannot cover $\{m_1',\ldots,m_k'\}$. Therefore, $\exists\, t\neq i_1,\ldots,i_{k-1}$, and $l\le k$ such that $m_t=m_l'\le k$, since $m_1'\le \ldots\le m_k'\le k$ in the characteristic sequence. In other words, 
$$(convf)^{m_t}(A_t)\supseteq A_t,\quad where\quad A_t=\{t,t+1\}\subseteq some\ I_j.$$
Therefore, we can just choose $r=t,\ s=t+1$ in lemma \ref{dis}.\par
\end{proof}

Recall that in one-dimensional dynamics we often consider a directed graph associated to a periodic orbit (For more details, see \cite{Uhland1982Interval} or chapter 1 of \cite{onedimdynamics1992}). Namely, if $f$ is a cyclic permutation in Definition \ref{char}, we can construct a directed graph $\Gamma_f$ with $(n-1)$ vertices $\{A_1,A_2,\ldots,A_{n-1}\}$, where $A_i=\{i,i+1\}$ is defined in \ref{char}. And there is an edge from $A_i$ to $A_j$ if and only if $convf(A_i)\supseteq A_j$.

\begin{exmp}\label{graph}
Let $f=(136245)\in S_6$ as in example \ref{m2}. Then the associated directed graph $\Gamma_f$ is drawn in figure \ref{fig:my_label3}.
\begin{figure}[ht]
    \centering
    \includegraphics{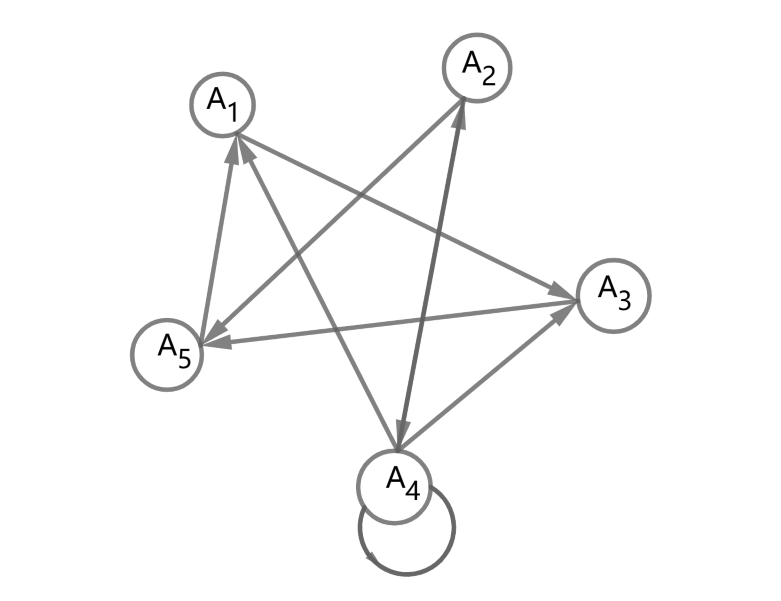}
    \caption{the directed graph $\Gamma_f$}
    \label{fig:my_label3}
\end{figure}
\end{exmp}

Thus, we see directly from the definition that the following proposition holds:

\begin{prop}\label{gr}
For a cyclic $f\in S_n$ and its associated directed graph $\Gamma_f$, the characteristic number $m_i$ is equal to the minimal length of cycles starting from $A_i$ in $\Gamma_f$.
\end{prop}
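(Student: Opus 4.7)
The plan is to establish both inequalities $m_i \le L_i$ and $m_i \ge L_i$, where $L_i$ denotes the minimal length of a cycle at $A_i$ in $\Gamma_f$. The bridge between the analytic-looking statement ``$(convf)^m(A_i) \supseteq A_i$'' and the combinatorial statement ``there exists a cycle of length $m$ based at $A_i$ in $\Gamma_f$'' is a discrete intermediate-value argument.

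For $m_i \le L_i$, I would first record the monotonicity of $convf$: if $A \subseteq B$, then $f(A) \subseteq f(B)$ and hence $convf(A) \subseteq convf(B)$. Given any cycle $A_i = A_{j_0} \to A_{j_1} \to \ldots \to A_{j_m} = A_i$ in $\Gamma_f$, the defining property $convf(A_{j_l}) \supseteq A_{j_{l+1}}$ combined with iterated monotonicity yields $(convf)^m(A_i) \supseteq A_i$. Taking $m = L_i$ (a shortest cycle) gives $m_i \le L_i$.

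For the reverse inequality $L_i \le m_i$, I would inductively construct a cycle of length $m_i$ backwards from $A_i$. Write $B_l = (convf)^l(A_i)$, which by definition is a discrete interval $\{a_l, a_l+1, \ldots, b_l\}$; note $B_{m_i} \supseteq A_i$. Set $A_{j_{m_i}} := A_i$. Suppose inductively that $A_{j_{l+1}} \subseteq B_{l+1}$ has been chosen. Since $B_{l+1} = [\min f(B_l), \max f(B_l)]$ contains $\{j_{l+1}, j_{l+1}+1\}$, there exist $p, q \in B_l$ with $f(p) \le j_{l+1}$ and $f(q) \ge j_{l+1}+1$. Assuming $p < q$ (the case $p > q$ is symmetric), let $j_l$ be the largest integer in $[p, q-1]$ with $f(j_l) \le j_{l+1}$. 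By maximality of $j_l$, or because $j_l + 1 = q$, we have $f(j_l+1) \ge j_{l+1}+1$. Thus $A_{j_l} = \{j_l, j_l+1\} \subseteq B_l$ satisfies $convf(A_{j_l}) \supseteq A_{j_{l+1}}$. At the final step $l = 0$, the constraint $A_{j_0} \subseteq B_0 = A_i$ forces $A_{j_0} = A_i$, closing the cycle and proving $L_i \le m_i$.

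The main obstacle is the discrete intermediate-value step in the backward construction: one must verify that the straddling pair $\{j_l, j_l+1\}$ consists of \emph{consecutive} integers contained in $B_l$, so that $A_{j_l}$ is a legitimate vertex of $\Gamma_f$ rather than an arbitrary two-element subset. The extremal choice of $j_l$ is exactly what delivers this. Once this step is in hand, combining the two inequalities gives $m_i = L_i$.
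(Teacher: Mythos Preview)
Your proof is correct. The paper does not actually give a proof of this proposition; it simply states that it ``follows directly from the definition,'' so your argument supplies details the paper omits. The one-line route presumably intended there is the identity
\[
(convf)^l(A_i)\;=\;\bigcup\bigl\{A_j:\text{there is a walk of length }l\text{ from }A_i\text{ to }A_j\text{ in }\Gamma_f\bigr\},
\]
proved by induction on $l$ using that for any discrete interval $B=\{a,\ldots,b\}$ one has $convf(B)=\bigcup_{j=a}^{b-1}convf(A_j)$ (consecutive terms share the point $f(j{+}1)$, so the union is again an interval). Given this identity, $(convf)^m(A_i)\supseteq A_i$ is literally the statement that $\Gamma_f$ contains a closed walk of length $m$ at $A_i$, and the two minima coincide. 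Your backward discrete-IVT construction is exactly the hands-on verification of the nontrivial direction of this identity, so the two viewpoints are essentially the same.
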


Using directed graphs, we can complete the proof of the equivalence of the two lemmas.

\begin{prop}
Lemma \ref{chars} is equivalent to lemma \ref{dis}.
\end{prop}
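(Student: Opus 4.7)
The plan is to prove the reverse direction of the equivalence, Lemma \ref{dis} $\Rightarrow$ Lemma \ref{chars}, since Proposition \ref{imp} already gives the forward one. By the reduction preceding Proposition \ref{imp}, I may assume $f$ in Lemma \ref{dis} is a cyclic permutation in $S_n$. I will argue the contrapositive: if Lemma \ref{chars} fails for some cyclic $f$, I construct a partition on which Lemma \ref{dis} fails as well.

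Suppose $m_{i_0}' > i_0$ for some $i_0 \in \{1, \ldots, n-1\}$, so the bad set $B := \{t \in [1, n-1] : m_t \le i_0\}$ has $|B| \le i_0 - 1$. Set $k := i_0$ and pick $k - 1$ boundaries $i_1 < \cdots < i_{k-1}$ in $[1, n-1]$ with $B \subseteq \{i_1, \ldots, i_{k-1}\}$; this is possible since $|B| \le k - 1$. In the resulting partition $I_1, \ldots, I_k$, every adjacent pair $A_t$ contained in some $I_j$ (equivalently, $t$ not a boundary) satisfies $m_t > k$. Applying Lemma \ref{dis} to this partition yields $r, s$ in some $I_j$ with $(convf)^l(\{r, s\}) \supseteq \{r, s\}$ for some $l \le k$. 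The case $r = s$ is impossible because a cyclic permutation of length $n$ has $f^l(r) = r$ only when $n \mid l$, whereas $l \le k \le n - 1$. Thus $r < s$ and $[r, s] \subseteq I_j$.

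The crux is then the sub-claim: for cyclic $f$, if $r < s$ and $(convf)^l(\{r, s\}) \supseteq \{r, s\}$, then some $t \in [r, s-1]$ satisfies $m_t \le l$. Granting it, $A_t \subseteq I_j$ with $m_t \le k$ contradicts the partition's design, completing the argument. To prove the sub-claim I would translate into $\Gamma_f$ via Proposition \ref{gr}. The central graph-theoretic identity is $convf([p, q]) = \bigcup_{u \in [p, q-1]} convf(A_u)$, which follows from a sign-change argument on the values $f(p), \ldots, f(q)$; iterating it gives that $A_j \subseteq (convf)^l([p, q])$ iff there is a walk of length $l$ in $\Gamma_f$ from some $u \in [p, q-1]$ to $j$. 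Applied with $[p, q] = [r, s]$ and $(convf)^l([r, s]) \supseteq [r, s]$, every $t \in V := [r, s-1]$ admits an $l$-walk into itself from $V$. The main obstacle is upgrading this set-level return to a closed $l$-walk through a single vertex of $V$: a naive predecessor iteration $\psi \colon V \to V$, if fixed-point-free, only yields a walk of length $jl$ with $j \ge 2$. Closing the gap relies on cyclicity of $f$, via a case analysis that rules out fixed-point-free predecessor structures on a contiguous block $V$ in $\Gamma_f$; the $l = 1$ instance already exhibits the phenomenon (a direct sign-change on $h(t) := f(t) - t$ over $[r, s]$, using absence of fixed points of $f$, produces $t^*$ with $f(t^*) \le t^* - 1$ and $f(t^* + 1) \ge t^* + 2$, hence a self-loop $m_{t^*} = 1$), and the general $l$ follows by tracking how the endpoints of $(convf)^m(\{r, s\})$ evolve to force a short cycle through some $A_t \subseteq [r, s]$.
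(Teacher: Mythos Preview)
Your setup is the same as the paper's: take $k$ with $m'_k\ge k+1$, use the (at most $k-1$) indices $t$ with $m_t\le k$ as partition boundaries, apply Lemma~\ref{dis} to get $r<s$ in some block with $(convf)^l(\{r,s\})\supseteq\{r,s\}$ for some $l\le k$, and then derive a contradiction from the sub-claim that some $t\in[r,s-1]$ has $m_t\le l$. Your exclusion of $r=s$ via cyclicity is correct.

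The genuine gap is the proof of the sub-claim for general $l$. You correctly isolate the obstacle: from ``every vertex of $V=[r,s-1]$ is reached by an $l$-walk starting in $V$'' a predecessor map $\psi:V\to V$ only yields a closed walk of length a \emph{multiple} of $l$. Your $l=1$ sign-change argument is fine, but ``the general $l$ follows by tracking how the endpoints of $(convf)^m(\{r,s\})$ evolve'' is not a proof. Indeed, in an arbitrary directed graph the implication you want is false (take $V=\{1,2\}$, edges $1\to3\to2\to4\to1$: there is a $2$-step return $V\to V$ but the shortest cycle through $V$ has length $4$), so any argument must exploit more than the abstract graph structure of $\Gamma_f$, and you have not said what.

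The paper closes this gap by \emph{leaving the discrete setting}: extend the cyclic permutation $f$ to the piecewise linear self-map of $[1,n]$. Then $(convf)^l(\{r,s\})\supseteq\{r,s\}$ forces $f^l([r,s])\supseteq[r,s]$, so by the intermediate value theorem there is $x_0\in[r,s]$ with $f^l(x_0)=x_0$. Since $f$ is an $n$-cycle and $l\le k\le n-1$, no integer is fixed by $f^l$, so $x_0\in(t,t+1)$ for some integer $t\in[r,s-1]$, and likewise each $f^j(x_0)$ lies in an open unit interval $(t_j,t_j+1)$. Piecewise linearity then gives $f([t_j,t_j+1])\supseteq[t_{j+1},t_{j+1}+1]$, i.e.\ an edge $A_{t_j}\to A_{t_{j+1}}$ in $\Gamma_f$; the orbit of $x_0$ therefore traces a closed walk of length $l$ through $A_t$, whence $m_t\le l$. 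This analytic fixed-point step is precisely the missing idea that your purely combinatorial outline does not supply.
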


\begin{proof}
Suppose lemma \ref{dis} is true. For a cyclic $f\in S_n$,  suppose on the contrary that there is a $k\le n-1$ such that $m_k'\geq k+1$, and $k$ is the minimal number with this property. Then there are only $(k-1)$ characteristic numbers $m_{i_1},m_{i_2},\ldots,m_{i_{k-1}}\le k$. Without loss of generality, assume $i_1<\ldots<i_{k-1}$. And consider the partition:
$$I_1=\{1,2,\ldots,i_1\},\quad I_2=\{i_1+1,\ldots,i_2\},\quad \ldots,\quad I_k=\{i_{k-1}+1,\ldots,n\}.$$
By lemma \ref{dis}, we can find $j,l\le k$ and $r,s\in I_j$ ($r<s$) such that
$$(convf)^l(\{r,s\})\supseteq \{r,s\}.$$
Note that $r=s$ cannot happen in this case, since $m_r=m_s\geq k+1$. Now if we extend $f$ to be a piecewise linear mapping from the interval $[1,n]$ to $[1,n]$, then we will obtain $f^l([r,s])\supseteq [r,s]$. Therefore, there exists $x_0\in [r,s]$ with $f^l(x_0)=x_0$. Note that since $f$ is cyclic as a permutation in $S_n$ and $l\le k\le n-1$, $x_0$ cannot belong to $\{1,2,\ldots,n\}$. Consequently, $x_0\in (t,t+1)$ for some integer $t\in [r,s-1]$  and it is similar for $f(x_0),f^2(x_0),\ldots,f^{l-1}(x_0)$. Since $f$ is piecewise linear, $x_0\in (t,t+1)$, $f(x_0)\in (t',t'+1)$ imply $f([t,t+1])\supseteq [t',t'+1]$, and it is similar for other pairs of points. So the orbit $\{x_0,f(x_0),\ldots,f^{l-1}(x_0),f^l(x_0)\}$ gives a cycle of length $l\le k$ starting from the vertex $A_t$ in the associated directed graph $\Gamma_f$, which implies $m_t\le k$. But this $t$ together with $i_1,\ldots,i_{k-1}$ gives $k$ characteristic numbers $\le k$, contradicting $m_{k}'\geq k+1$.\par
Combined with Proposition \ref{imp}, the proof is completed.
\end{proof}

Therefore, we have reduced the initial problem on the existence of periodic points to a lemma in combinatorics on the properties of cyclic permutations. In the next section, we will prove the lemma and that will complete the proof of theorem \ref{main}.

We end this section with some examples of cyclic permutations whose characteristic sequences are known.
\begin{exmp}
(1) Let $f\in S_n$ be $(1\to 2\to \ldots\to n\to 1)$. And more generally, $f(i)\equiv i+m\pmod{n}$, where $(m,n)=1$. Then the characteristic sequence of $f$ is $1\le 2\le \ldots\le n-1$.\par
(2) Let $f\in S_{2n+1}$ be of Stefan type (see \cite{A1995COEXISTENCE} or \cite{P1977A} for more details), that is, 
$$1\to (n+1)\to (n+2)\to n\to (n+3)\to (n-1)\to\ldots\to 2n \to 2\to (2n+1)\to1.$$
Then the characteristic sequence of $f$ is $$1\le 2\le 2\le 4\le 4\le\ldots\le 2n-2\le 2n-2\le 2n.$$\par
(3) In \cite{10.2307/2690145}, the author gives all the directed graphs associated to cyclic permutations in $S_5$.
\end{exmp}

\section{Proof of the Lemma in Combinatorics}

We finally prove lemma \ref{chars} in this section. We first recall some definitions and notations from one-dimensional dynamics and linear algebra.

\begin{defn}
Let $f\in S_n$ be a cyclic permutation and $\Gamma_f$ be its associated directed graph. (See the discussion before propositon \ref{gr}.) We can define $T_f$ to be the $(n-1)\times (n-1)$ adjacency matrix of $f$ and $\Gamma_f$. That is, $T_{ij}=1$, if $convf(A_j)\supseteq A_i$, or equivalently, if there is an edge from vertex $A_j$ to vertex $A_i$ in $\Gamma_f$. Otherwise, $T_{ij}=0$.
\end{defn}

\begin{exmp}
Let $f=(136245)\in S_6$ be the permutation in example \ref{m2} and example \ref{graph}. Then the adjacency matrix $T_f$ is
$$
\begin{bmatrix}
 &0 & 0 & 0 & 1& 1&\\
 &0 & 0 & 0 & 1& 0&\\
 &1 & 0 & 0 & 1& 0&\\
 &0 & 1 & 0 & 1& 0&\\
 &0 & 1 & 1 & 0& 0&
\end{bmatrix}
$$
\end{exmp}

We denote the field $\{0,1\}$ with exactly two elements by  $\mathbb{F}_2$. And we say $A\equiv B\pmod{2}$ for integer valued matrices $A$ and $B$, when $A=B$ as matrices in $M(\mathbb{F}_2)$.

\begin{prop}\label{mod}
For any cyclic permutation $f\in S_n$ and $l\in \NN_+$, it holds that $T_{f}^l\equiv T_{f^l}\pmod{2}$.
\end{prop}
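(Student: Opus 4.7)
The plan is to interpret both sides of the congruence geometrically, using the piecewise-linear extension of $f$ to the interval $[1,n]$. Recall that $(T_f^l)_{ij}$ counts directed walks of length $l$ from $A_j$ to $A_i$ in $\Gamma_f$, while $(T_{f^l})_{ij}$ is $1$ precisely when the convex hull of $\{f^l(j), f^l(j+1)\}$ contains $\{i, i+1\}$. Since $f$ is a permutation, its piecewise-linear extension is strictly monotone on each $[k,k+1]$, and hence $f^l$ is piecewise-linear and strictly monotone on each maximal subinterval of linearity inside $[j, j+1]$.

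The main step is to identify $(T_f^l)_{ij}$ with a concrete geometric count. I would show that walks of length $l$ from $A_j$ to $A_i$ in $\Gamma_f$ correspond bijectively to maximal subintervals $J \subseteq [j,j+1]$ on which $f^l$ is linear and $f^l(J) = [i,i+1]$. The step I expect to be the main technical obstacle is the auxiliary lemma that the image $f^m(J)$ of every such maximal piece is in fact exactly an integer interval $[k_m, k_m+1]$, not merely contained in one. This should follow by induction on $m$: if $f^m(J)$ contained two integers at distance at least $2$, then by strict monotonicity there would be an interior point of $J$ where $f^m$ takes an integer value, which is by definition a kink of $f^l$ and contradicts the maximality of $J$. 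Granting this, the sequence $j = k_0, k_1, \ldots, k_l = i$ of integer intervals visited reconstructs the walk, and conversely any walk uniquely determines such a maximal piece by taking successive preimages.

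Once the correspondence is in place, $(T_f^l)_{ij}$ equals the number of preimages of the half-integer $i + \tfrac{1}{2}$ in $[j,j+1]$ under $f^l$, since each maximal piece $J$ with $f^l(J)=[i,i+1]$ contributes exactly one such preimage and pieces with other images contribute none. Finally, because $f^l(j)$ and $f^l(j+1)$ are integers and thus distinct from $i + \tfrac{1}{2}$, a standard intermediate value argument applied piece by piece shows that the number of such preimages is odd iff $f^l(j)$ and $f^l(j+1)$ lie on opposite sides of $i+\tfrac{1}{2}$; but this is exactly the condition $[i,i+1] \subseteq \operatorname{conv}\{f^l(j), f^l(j+1)\}$, i.e.\ $(T_{f^l})_{ij} = 1$. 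Combining the two steps yields $(T_f^l)_{ij} \equiv (T_{f^l})_{ij} \pmod{2}$, as desired.
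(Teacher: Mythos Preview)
Your approach and the paper's coincide: both extend $f$ piecewise-linearly to $[1,n]$ and reduce the identity to a parity count of how often $f^l|_{[j,j+1]}$ covers a given point. The paper phrases this as ``$f^l(A_j)=[f^l(j),f^l(j+1)]$ holds in the sense of mod $2$ when counted with multiplicity,'' which is your preimage-of-$i+\tfrac12$ argument in different words.

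There is, however, a small genuine gap in your bijection. Your induction only yields $f^m(J)\subseteq[k_m,k_m+1]$ for $m\le l-1$: a point where $f^l$ (as opposed to some $f^m$ with $m<l$) hits an integer is \emph{not} a kink of $f^l$, so nothing stops $f^l(J)$ from being a longer integer interval. Concretely, for $f=(1\,3\,2)\in S_3$ and $l=2$, the interval $J=[\tfrac32,2]$ is a maximal linear piece of $f^2$ on $[1,2]$, yet $f^2(J)=[1,3]$. Hence your assertion that ``pieces with other images contribute no preimage of $i+\tfrac12$'' is false as stated. The repair is immediate: the correct bijection is between length-$l$ walks $A_j\to\cdots\to A_i$ and maximal linear pieces $J$ with $f^l(J)\supseteq[i,i+1]$ (containment, not equality), and the walk is recovered from the unique $k_m$ with $f^m(J)\subseteq[k_m,k_m+1]$ for $m<l$. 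With that amendment your preimage count and the final parity step go through verbatim.
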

A proof of this proposition can be found in \cite{onedimdynamics1992}, Chapter 1, Propositon 20. And for readers' convenience, we give a sketch of proof here.
\begin{proof}[Sketch of proof]
Given $f=(i_1i_2\ldots i_n)\in S_n$ cyclic, we first extend $f$ to be a continuous piecewise linear mapping from the closed interval $[1,n]$ to itself. And recall we define $A_i=\{i,i+1\}$. Later on we will also regard $A_i$ as the closed interval $[i,i+1]$.\par
Now by the definition of $T_f$ and the continuity of $f$, $(T_f)_{ij}=1$ iff $f(A_j)\supseteq A_i$. Consider the interval $A_i=[i,i+1]$ and its image $f^l(A_i)$. In general, we cannot expect $f^l(A_i)=[f^l(i),f^l(i+1)]$ to hold, where $[a,b]$ denotes the closed interval whose end points are $a$ and $b$. We can only say that $f^l(A_i)\supseteq[f^l(i),f^l(i+1)]$ for continuous $f$. However, if we count the multiplicity of each point in the image $f^l(A_i)$, we will find that $f^l(A_i)=[f^l(i),f^l(i+1)]$ actually holds in the sense of mod 2. Indeed, $f^l(A_i)$ can be viewed as a continuous curve in $\mathbb{R}$ starting from the point $f^l(i)$ and ending at $f^l(i+1)$. Therefore, every point $p\in f^l(A_i)$ is counted an even number of times in this curve, unless $p\in[f^l(i),f^l(i+1)]$.\par 
Finally recall the definition of $T_f$ and $T_{f^l}$, and we get the conclusion of the proposition in the sense of mod 2.
\end{proof}

\begin{rem}
If the readers are familiar with representation theory of the symmetric group $S_n$, then they may find that $T_f$ defined above is just the matrix of the tautological representation of $f$ mod 2.
\end{rem}

\begin{cor}\label{id}
For cyclic $f\in S_n$, we have $T_f^n\equiv I\pmod{2}$, where $I=I_{n-1}$ is the unit matrix. 
\end{cor}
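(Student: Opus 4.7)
The plan is to apply Proposition \ref{mod} with $l = n$ and then compute the right-hand side. Since $f$ is a cyclic permutation of length $n$ in $S_n$, its order as a group element is exactly $n$, so $f^n$ is the identity permutation. By Proposition \ref{mod} this gives $T_f^n \equiv T_{\mathrm{id}} \pmod 2$, and the corollary reduces to verifying $T_{\mathrm{id}} = I_{n-1}$.

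The computation of $T_{\mathrm{id}}$ is immediate from the definition. For the identity map we have $\mathrm{id}(A_j) = A_j = \{j, j+1\}$, and therefore $\mathrm{conv}(\mathrm{id}(A_j)) = A_j$. The matrix entry $(T_{\mathrm{id}})_{ij}$ is thus $1$ exactly when $A_i \subseteq A_j$, and for two-element sets of consecutive integers this inclusion forces $i = j$. Hence $T_{\mathrm{id}} = I_{n-1}$, and chaining the two steps gives $T_f^n \equiv I \pmod 2$ as claimed.

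The only mild subtlety worth noting is that Proposition \ref{mod} is stated for cyclic $f$, while the iterate $f^n = \mathrm{id}$ is not cyclic once $n \geq 2$. However, the definition $(T_g)_{ij} = 1 \iff \mathrm{conv}(g(A_j)) \supseteq A_i$ makes perfect sense for any map $g$ on $\{1,\ldots,n\}$, and the sketched proof of Proposition \ref{mod} uses only the continuity of the piecewise linear extension and the parity-of-crossings argument, both of which apply to every iterate of the cyclic $f$. So no genuine obstacle arises and the corollary follows in essentially one line.
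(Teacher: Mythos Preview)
Your proof is correct and is exactly the intended argument: the paper states this result as an immediate corollary of Proposition~\ref{mod} without supplying a separate proof, and the one-line deduction $T_f^n \equiv T_{f^n} = T_{\mathrm{id}} = I_{n-1}$ that you spell out is precisely what is implied. Your remark on the formal scope of the definition of $T_g$ is a fair observation but, as you note, causes no trouble.
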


\begin{thm}\label{det}
For cyclic $f\in S_n$, the characteristic polynomial of $T_f$, viewed as the determinant of a matrix in $M_{n-1}(\mathbb{F}_2)$, is exactly:
$$\det(\lambda I-T_f)=1+\lambda+\lambda^2+\ldots+\lambda^{n-1}\pmod{2}.$$
\end{thm}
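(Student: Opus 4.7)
The plan is to promote the combinatorial $(n-1)\times(n-1)$ matrix $T_f$ to the $n\times n$ permutation matrix $P$ of $f$---whose characteristic polynomial is the transparent $\lambda^n-1$---via an $\mathbb{F}_2$-factorisation $T_f=VPW$, and then to extract $\det(\lambda I_{n-1}-T_f)$ using the Sylvester determinant identity together with the matrix determinant lemma.

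The key observation is that $(T_f)_{ij}=1$ iff exactly one of $f(j)$, $f(j+1)$ lies in $\{1,\ldots,i\}$, so over $\mathbb{F}_2$
\[(T_f)_{ij}\equiv \mathbf{1}_{\{f(j)\le i\}}+\mathbf{1}_{\{f(j+1)\le i\}}\pmod{2}.\]
Expanding this sum produces the (mod-$2$) factorisation $T_f=VPW$, where $V\in M_{(n-1)\times n}(\mathbb{F}_2)$ is the lower-triangular all-ones matrix $V_{ij}=\mathbf{1}_{\{j\le i\}}$, $P$ is the permutation matrix of $f$, and $W\in M_{n\times(n-1)}(\mathbb{F}_2)$ is the incidence matrix $W_{kj}=\mathbf{1}_{\{k=j\}}+\mathbf{1}_{\{k=j+1\}}$.

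From here the argument is pure linear algebra. Applying the Sylvester identity (which equates the characteristic polynomials of $AB$ and $BA$ up to a power of $\lambda$) to $A=VP$ and $B=W$ gives
\[\lambda\det(\lambda I_{n-1}-T_f)=\det(\lambda I_n-WVP).\]
A direct computation shows $WV=I_n+e_n\mathbf{1}^T$ over $\mathbb{F}_2$, and together with the elementary $\mathbf{1}^TP=\mathbf{1}^T$ this yields $WVP=P+e_n\mathbf{1}^T$. The matrix determinant lemma then produces
\[\det(\lambda I_n-P-e_n\mathbf{1}^T)=\det(\lambda I_n-P)+\mathbf{1}^T\,\mathrm{adj}(\lambda I_n-P)\,e_n.\]

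Cyclicity of $P$ makes both ingredients on the right fully explicit. The identity $(\lambda I-P)\sum_{k=0}^{n-1}\lambda^{n-1-k}P^k=(\lambda^n-1)I$ (which uses only $P^n=I$) immediately gives $\det(\lambda I_n-P)=\lambda^n-1$ and $\mathrm{adj}(\lambda I_n-P)=\sum_{k=0}^{n-1}\lambda^{n-1-k}P^k$. Because each $P^k$ is itself a permutation matrix, $\mathbf{1}^TP^ke_n=1$ for every $k$, and hence $\mathbf{1}^T\mathrm{adj}(\lambda I_n-P)e_n=1+\lambda+\cdots+\lambda^{n-1}$. Reducing mod $2$,
\[\lambda\det(\lambda I_{n-1}-T_f)\equiv(\lambda^n+1)+(1+\lambda+\cdots+\lambda^{n-1})=\lambda(1+\lambda+\cdots+\lambda^{n-1}),\]
and cancelling $\lambda$ yields the claim. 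The delicate point---and the one I would want to nail down first---is the factorisation $T_f=VPW$: the condition ``exactly one of $f(j),f(j+1)$ is $\le i$'' becomes an additive expression only after reducing mod $2$, so the identity holds in $M(\mathbb{F}_2)$ but fails over $\mathbb{Z}$, and every downstream determinant manipulation must respect this.
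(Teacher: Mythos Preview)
Your proof is correct and takes a genuinely different route from the paper's. The paper proceeds indirectly: it first establishes that $T_f^n\equiv I\pmod 2$, so the minimal polynomial of $T_f$ over $\mathbb{F}_2$ divides $\lambda^n-1$; it then proves, by an inductive argument constructing an explicit cyclic vector $\alpha$ with $\alpha,T_f\alpha,\dots,T_f^{n-2}\alpha$ linearly independent, that the minimal polynomial has degree $n-1$ and hence coincides with the characteristic polynomial; a degree count against $\lambda^n-1=(\lambda+1)(1+\lambda+\cdots+\lambda^{n-1})$ then forces the answer. Your approach bypasses all of this: the factorisation $T_f\equiv VPW\pmod 2$ lifts the problem to the $n\times n$ permutation matrix $P$, whose characteristic polynomial $\lambda^n-1$ is transparent, and Sylvester together with the matrix-determinant lemma finish the computation. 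This is more direct and self-contained---it needs neither the mod-$2$ identity $T_{f^l}\equiv T_f^l$ nor the cyclic-vector lemma---and it isolates precisely where cyclicity enters (only through $\det(\lambda I_n-P)=\lambda^n-1$).

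One small caveat: the telescoping identity $(\lambda I-P)\sum_{k=0}^{n-1}\lambda^{n-1-k}P^k=(\lambda^n-1)I$ indeed uses only $P^n=I$, but it does \emph{not} by itself yield $\det(\lambda I_n-P)=\lambda^n-1$ (take $P=I$, which satisfies $P^n=I$ but has characteristic polynomial $(\lambda-1)^n$). You need the separate, elementary fact that the permutation matrix of an $n$-cycle has characteristic polynomial $\lambda^n-1$; once that is in hand, the identity does deliver the adjugate formula after cancelling $\lambda I-P$ over the fraction field $\mathbb{F}_2(\lambda)$.
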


We need the following lemma:
\begin{lem}
There exists a vector $\alpha \in \mathbb{F}_2^{n-1}$ such that $\alpha, T_f\alpha,\ldots, T_f^{n-2}\alpha$ are linearly independent over the field $\mathbb{F}_2$.    
\end{lem}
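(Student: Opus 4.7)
The plan is to reformulate the statement as a question about the permutation action of $f$ on the ambient space $\mathbb{F}_2^n$, where a cyclic vector is manifest, and then to transport the conclusion back to $\mathbb{F}_2^{n-1}$.

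Concretely, I would let $F$ denote the permutation matrix of $f$ on $\mathbb{F}_2^n$ given by $Fv_i = v_{f(i)}$, and introduce the $F$-invariant hyperplane $B = \{\sum_{j=1}^n a_j v_j : \sum_j a_j = 0\}$. The first step is to identify $\mathbb{F}_2^{n-1}$ with $B$ via the boundary map $\partial(e_i) = v_i + v_{i+1}$ and to verify the intertwining $\partial \circ T_f = F \circ \partial$. If $\{f(i), f(i+1)\} = \{a, b\}$ with $a < b$, then by the definition of $T_f$ the column $T_f e_i$ equals $e_a + e_{a+1} + \cdots + e_{b-1}$, and applying $\partial$ yields the telescoping sum $\sum_{k=a}^{b-1}(v_k + v_{k+1}) = v_a + v_b$, which coincides with $F(v_i + v_{i+1}) = F\partial(e_i)$ in $\mathbb{F}_2$. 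A short check shows $\partial$ is injective and hence an isomorphism $\mathbb{F}_2^{n-1} \to B$, so producing a cyclic vector for $T_f$ reduces to producing one for $F|_B$.

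The latter can be exhibited explicitly: take $w = (F + I)v_1 = v_1 + v_{f(1)} \in B$. Writing $a_k = f^k(1)$, one has $F^k w = v_{a_k} + v_{a_{k+1}}$, and since $f$ is an $n$-cycle the sequence $a_0, a_1, \ldots, a_{n-1}$ is a permutation of $\{1, 2, \ldots, n\}$. The $n-1$ vectors $v_{a_0}+v_{a_1}, v_{a_1}+v_{a_2}, \ldots, v_{a_{n-2}}+v_{a_{n-1}}$ are then linearly independent in $\mathbb{F}_2^n$ by a triangular argument on the coefficients of the distinct basis vectors $v_{a_0}, \ldots, v_{a_{n-1}}$, and hence form a basis of the $(n-1)$-dimensional space $B$. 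Setting $\alpha = \partial^{-1}(w)$ gives the cyclic vector demanded by the lemma.

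The main obstacle I expect is setting up the intertwining cleanly: the combinatorial definition of $T_f$ via convex hulls of images of $2$-element sets must be packaged algebraically so that the mod-$2$ telescoping cancellation exactly matches the permutation action $F$ on $\mathbb{F}_2^n$. Once this identification is in place, the rest is a short explicit computation. One could alternatively argue abstractly by noting that $\mathbb{F}_2^n$ is a cyclic $\mathbb{F}_2[X]$-module (with $X$ acting as $F$ and cyclic vector $v_1$), and that every submodule of a cyclic module over the PID $\mathbb{F}_2[X]$ is cyclic, but the explicit construction above makes the desired vector completely transparent and avoids invoking the structure theorem.
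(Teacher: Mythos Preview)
Your proof is correct, and in fact the vector $\alpha=\partial^{-1}(v_1+v_{f(1)})$ you construct coincides with the vector the paper writes down explicitly as $\sum_{j=i_1}^{i_2-1}e_j$; moreover your formula $F^k w=v_{a_k}+v_{a_{k+1}}$ is precisely the paper's observation that $T_f^m\alpha$ represents the interval $[i_{m+1},i_{m+2}]$.

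Where the two arguments diverge is in how linear independence is established. The paper stays inside $\mathbb{F}_2^{n-1}$ and argues by induction on $n$: an inner-product trick (pairing against $e_1$ or $e_{i_1-1}+e_{i_1}$, depending on whether $i_1$ is an endpoint) forces the first coefficient $\mu_1$ to vanish, and then one drops the index $i_1$ to reduce to the cyclic permutation $(i_2\ldots i_n)\in S_{n-1}$. Your route instead lifts everything to $\mathbb{F}_2^n$ via the boundary map $\partial$, verifies the intertwining $\partial\,T_f=F\,\partial$ by the mod-$2$ telescoping identity, and then reads off independence of $w,Fw,\ldots,F^{n-2}w$ from the triangular shape of the system $\{v_{a_k}+v_{a_{k+1}}\}$ in the standard basis. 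This packaging is cleaner: it avoids induction, explains \emph{why} this particular $\alpha$ should work (it is the pullback of the obvious cyclic vector for the permutation representation restricted to the augmentation-zero hyperplane), and isolates the combinatorics of $T_f$ in the single identity $\partial\,T_f=F\,\partial$. The paper's argument, by contrast, is more hands-on and self-contained, never introducing the ambient $\mathbb{F}_2^n$, at the cost of a somewhat ad hoc case split on the position of $i_1$.
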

\begin{proof}
We claim that if $f\in S_n$ is written as $(i_1i_2\ldots i_n)$, then the vector $\alpha=(0,\ldots,0,1,\ldots,1,0,\ldots,0)^T=\sum_{j=i_1}^{i_2-1}e_j$ (or $\sum_{j=i_2}^{i_1-1}e_j$) satisfies the desired property, where $e_j\in \mathbb{F}_2^{n-1}$ is the unit vector whose $j$-th component is $1$.

We prove the claim by induction on $n$.
The case $n=2$ is trivial. We assume the statement holds for all natural numbers $<n$, and we will prove it for $n$.

Suppose $$\mu_1\alpha+\mu_2T_f\alpha+\ldots+\mu_{n-1}T_f^{n-2}\alpha=0.$$
By the definition of $\alpha$, $f=(i_1i_2\ldots i_n)\in S_n$, as well as the argument in the proof of proposition \ref{mod}, we can see that 
$$T_f^m\alpha=\sum_{j=i_{m+1}}^{{i_{m+2}-1}}e_j,\quad or \sum_{j=i_{m+2}}^{{i_{m+1}-1}}e_j,\quad \forall m\le n-2.$$
In other words, if we view each vector $e_j$ as the closed interval $[j,j+1]$ it represents, then we find that
$$\alpha=[i_1,i_2],\quad T_f\alpha=[i_2,i_3],\quad\ldots\quad, T_f^{n-2}\alpha=[i_{n-1},i_n].$$
Now if $i_1=1$, take the inner product with $e_1$, and we obtain:
$$0=\left(\mu_1\alpha+\mu_2T_f\alpha+\ldots+\mu_{n-1}T_f^{n-2}\alpha,\: e_1\right)=\mu_1,$$
because $i_2,\ldots,i_n\geq 2$ and hence $(T_f^m\alpha, e_1)=0$ for $1\le m\le n-2$. Similarly we can obtain $\mu_1=0$ when $i_1=n$.\par
If $2\le i_1\le n-1$, then since $i_1$ does not appear as the end points of $T_f^m\alpha$, $[i_1-1,i_1]$ and $[i_1,i_1+1] \subseteq or \not\subseteq T_f^m\alpha$ at the same time, where $m\geq1$. Or equivalently,
$$(T_f^m\alpha, e_{i_1-1})=(T_f^m\alpha, e_{i_1}),\quad 1\le m\le n-2.$$
And then if we take the inner product with $(e_{i_1-1}+e_{i_1})$, we obtain:
$$0=\left(\mu_1\alpha+\mu_2T_f\alpha+\ldots+\mu_{n-1}T_f^{n-2}\alpha,\:(e_{i_1-1}+e_{i_1})\right)=\mu_1\pmod{2},$$
Anyway, we find that $\mu_1=0$ and consequently,
$$\mu_2T_f\alpha+\ldots+\mu_{n-1}T_f^{n-2}\alpha=0.$$
Now if we put $g=(i_2i_3\ldots i_n)\in S_{n-1}$ and $\beta=T_f\alpha$ with the $i_1$-th component dropped, we can see that $T_f^m\alpha=T_g^{m-1}\beta$, regardless of the $i_1$-th component of $T_f^m\alpha$. This is easily seen to be true since one can find $T_f^m\alpha=T_g^{m-1}\beta=[i_{m+1},i_{m+2}]$ as intervals. And the following holds for $\beta$ and $T_g$:
$$\mu_2\beta+\mu_3T_g\beta+\ldots+\mu_{n-1}T_g^{n-3}\beta=0.$$
By induction hypothesis we conclude that $\mu_2=\ldots=\mu_{n-1}=0$. Therefore, we have proved that the claim also holds for $n$. This completes the proof of the lemma.
\end{proof}

\begin{cor}\label{mc}
The minimal polynomial $g(\lambda)$ of $T_f$ has degree at least $n-1$, and therefore equals the characteristic polynomial.
\end{cor}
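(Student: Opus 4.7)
The plan is to use the preceding lemma as a black box: it produces a vector $\alpha \in \mathbb{F}_2^{n-1}$ whose iterates $\alpha, T_f\alpha, \ldots, T_f^{n-2}\alpha$ are linearly independent over $\mathbb{F}_2$. Granted this, the corollary should follow almost immediately by a standard minimal-polynomial argument.

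Concretely, I would argue by contradiction. Suppose the minimal polynomial $g(\lambda) \in \mathbb{F}_2[\lambda]$ of $T_f$ has degree $d < n-1$, and write $g(\lambda) = \lambda^d + c_{d-1}\lambda^{d-1} + \cdots + c_1 \lambda + c_0$. Since $g(T_f) = 0$ as an operator, applying $g(T_f)$ to the vector $\alpha$ furnished by the lemma gives
$$T_f^d \alpha + c_{d-1} T_f^{d-1}\alpha + \cdots + c_1 T_f \alpha + c_0 \alpha = 0.$$
The leading coefficient is $1$, so this is a non-trivial $\mathbb{F}_2$-linear dependence among $\alpha, T_f\alpha, \ldots, T_f^{d}\alpha$. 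But $d \le n-2$, so these vectors are a subset of $\alpha, T_f\alpha, \ldots, T_f^{n-2}\alpha$, which the previous lemma asserts are linearly independent. This is a contradiction, so $\deg g \ge n-1$.

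For the second assertion, note that $T_f$ is an $(n-1) \times (n-1)$ matrix, so its characteristic polynomial has degree exactly $n-1$ and is annihilated by $T_f$ (Cayley--Hamilton, valid over any field including $\mathbb{F}_2$). Hence the minimal polynomial divides the characteristic polynomial; since both are monic and the former has degree at least $n-1$ while the latter has degree exactly $n-1$, the two polynomials must coincide. No step here looks delicate: the essential content has been absorbed into the preceding lemma, and what remains is the routine fact that the existence of a cyclic vector forces the minimal and characteristic polynomials to agree.
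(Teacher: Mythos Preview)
Your argument is correct and is precisely the paper's approach: assume $\deg g \le n-2$, note that $g(T_f)\alpha = 0$ yields a nontrivial dependence among $\alpha, T_f\alpha, \ldots, T_f^{n-2}\alpha$ contradicting the lemma, and then invoke Cayley--Hamilton for the second claim. You have simply spelled out in detail what the paper states in two sentences.
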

\begin{proof}
Suppose $\deg g(\lambda)\le n-2$, then we have
$g(T_f)\alpha=0$, which contradicts the lemma above. And the second statement follows by Cayley-Hamilton's theorem. (Note that $T_f$ is an $(n-1)\times (n-1)$ matrix.)
\end{proof}

With these results, we can compute the characteristic polynomial of $T_f$.
\begin{proof}[Proof of Theorem \ref{det}]
 It follows from corollary \ref{id} and \ref{mc} that 
 $$\det(\lambda I-T_f)=g(\lambda)\mid(\lambda^n-1)=(1+\lambda)(1+\lambda+\lambda^2+\ldots+\lambda^{n-1})\pmod{2}.$$
 Moreover, since the only degree-one irreducible polynomials are $\lambda$ and $\lambda+1$ in $\mathbb{F}_2[\lambda]$, the right hand side can be further decomposed as
 $$1+\lambda+\lambda^2+\ldots+\lambda^{n-1}=(1+\lambda)^mh(\lambda),$$
 where $m\geq0$, and $h(\lambda)$ is a product of several irreducible polynomials of degree $\geq 2$. Hence, by comparing the degree we conclude that 
 $$\det(\lambda I-T_f)=1+\lambda+\lambda^2+\ldots+\lambda^{n-1}\pmod{2}.$$
\end{proof}

\begin{proof}[Proof of lemma \ref{chars}]
We show that if the coefficient of $\lambda^{n-1-i}$ in $\det(\lambda I-T_f)$ is nonzero, then $m_i'\le i$ holds for the characteristic sequence in lemma \ref{chars}.

Let us consider how the coefficients in $\det(\lambda I-T_f)$ are computed. It is then easily seen that the coefficient of $\lambda^{n-1-i}$ is nonzero implies the existence of an $i\times i$ principle matrix $P_i$ of $T_f$ whose determinant is nonzero. It then follows that there exists at least one diagonal of $P_i$ whose elements are all equal to 1. Now it is easily checked that this diagonal give rise to several cycles of length $\le i$ in the associated directed graph $\Gamma_f$. And the vertices of these cycles correspond to the subscripts of the columns of $P_i$. Thus there exist at least $i$ vertices in $\Gamma_f$, from whom the minimal length of cycles starting are all $\le i$. By proposition \ref{gr}, this means that at least $i$ of the characteristic numbers $\le i$. So $m_i'\le i$.
\end{proof}

\section*{Acknowledgement}
I would like to thank Prof. S. A. Bogatyi for introducing the problem to me. And I thank my supervisor, Prof. Tian Gang, for his encouragement as well as helpful suggestions. I'm also grateful to Dr. Ye Yanan and Dr. Zhao Yikai for writing computer programs to verify some claims.

\bibliographystyle{unsrt}
\bibliography{ref}

\begin{thebibliography}{1}

\bibitem{1988Periodic}
S.~A. Bogatyi and E.~T. Shavgulidze.
\newblock Periodic points of a map of a system of intervals.
\newblock {\em Mathematical Notes of the Academy of Sciences of the Ussr},
  43(3):210--219, 1988.

\bibitem{A1995COEXISTENCE}
A.~N. Sharkovskii.
\newblock Coexistence of cycles of a continuous map of the line into itself.
\newblock {\em International Journal of Bifurcation and Chaos},
  5(5):1263--1273, 1995.

\bibitem{Uhland1982Interval}
Uhland Burkart.
\newblock Interval mapping graphs and periodic points of continuous function.
\newblock {\em Journal of Combinatorial Theory, Series B}, 32(1):57--68, 1982.

\bibitem{onedimdynamics1992}
L.~S. Block and W.~A. Coppel.
\newblock {\em Dynamics in One Dimension}.
\newblock Springer, 1992.

\bibitem{P1977A}
P.~Štefan.
\newblock A theorem of Šarkovskii on the existence of periodic orbits of
  continuous endomorphisms of the real line.
\newblock {\em Communications in Mathematical Physics}, 54(3):237--248, 1977.

\bibitem{10.2307/2690145}
Philip~D. Straffin.
\newblock Periodic points of continuous functions.
\newblock {\em Mathematics Magazine}, 51(2):99--105, 1978.

\end{thebibliography}

\end{document}